\documentclass[12pt]{article}

\usepackage[a4paper, left=2cm, right=2cm, top=2cm, bottom=2cm]{geometry}

\usepackage{amssymb,amsfonts,amsmath,amsthm,amscd,mathabx,array,hhline}
\usepackage[all]{xy}
\usepackage{amsmath}
\usepackage{authblk}
\usepackage{graphicx}
\usepackage{cite}
\usepackage[table,xcdraw]{xcolor}
\usepackage{multirow}
\usepackage{hyperref}
\usepackage{verbatim}
\hypersetup{hidelinks}

\newtheorem{lemma}{Lemma}[section]
\newtheorem{theorem}[lemma]{Theorem}
\newtheorem{prop}[lemma]{Proposition}

\newtheorem{cor}[lemma]{Corollary}
\newtheorem{conj}[lemma]{Conjecture}
\theoremstyle{definition}

\newtheorem{remark}[lemma]{Remark}

\newenvironment{proof_of}[1]{\medskip\noindent{\it Proof #1}}
{\hfill$\Box$ \bigskip}

\makeatletter
\newcounter{bwt}

\makeatother

\DeclareMathOperator{\Id}{\mathrm{Id}}

\DeclareMathOperator{\Sym}{\mathcal{S}}
\DeclareMathOperator{\A}{\mathcal{A}}

\hypersetup{
  pdftitle={On multilinear polynomial identities for 2 by 2 matrices in characteristic 2},
  pdfauthor={Iritan Ferreira dos Santos},
  pdfkeywords={polynomial identities, matrix algebras,
  multilinear identities, relatively free algebras}
}

\title{\vspace{-24pt}
\bfseries\Large
On multilinear polynomial identities \\
for 
$2 \times 2$ matrices in characteristic $2$}

\author[1]{\bfseries \normalsize Iritan Ferreira dos Santos}

\date{\small }

\affil[1]{Universidade Estadual de Campinas (UNICAMP), 651 Sergio Buarque de Holanda, 13083-859 Campinas, SP, Brazil\\
\texttt{i195167@dac.unicamp.br}}

\begin{document}

\maketitle

\begin{abstract}
\noindent
Let~$K$ be an infinite field of characteristic~$2$.
It is well known that the Lie algebra $\mathfrak{gl}_2(K)=M_2(K)^{(-)}$ of all
$2\times 2$ matrices 
does not admit a finite basis of polynomial identities. Similarly, the finite basis problem for the associative algebra
$M_2(K)$ remains open.
In this note, we obtain two multilinear identities of
degree~$5$ for $M_2(K)$ and
prove that, together with the standard polynomial of degree~$4$, they
generate all multilinear identities of degree up to~$7$. Moreover, we prove that none of the three generators can be omitted.

\medskip
\noindent    
{\sc Keywords:} polynomial identities,
matrix algebras,
multilinear identities,
relatively free algebras.

\medskip
\noindent
\textit{MSC 2020:} 16R10.
\end{abstract}


\section{Introduction}

Let $M_n(K)$ be the algebra of 
$n \times n$ matrices over a field 
$K$. The description of the T-ideal of polynomial identities of matrix algebras is one of the central problems in the theory of algebras with polynomial identities (PI-theory). Even for $M_2(K)$, the answer depends essentially on the characteristic of the field.
For
fields of characteristic different from $2$, finite generating sets for the corresponding $T$-ideal are known (see, for example,~\cite{Drensky_1981_2x2, Koshlukov_2001_2x2}).
However, the corresponding problem over an infinite field of characteristic $2$ remains open.

The exceptional behavior in characteristic $2$ is already evident when $M_2(K)$ is viewed as a Lie algebra under the commutator bracket. Vaughan-Lee \cite{Vaughan-Lee1970} proved that the Lie algebra $\mathfrak{gl}_2(K)=M_2(K)^{(-)}$ of $2\times 2$ matrices over an infinite field of characteristic~$2$
does not admit a finite basis of polynomial identities. An explicit infinite basis was obtained by Drensky in his doctoral thesis~\cite{Drensky1979} and, independently, by Lopatin \cite{lopatin2016identitiesliealgebragl2}. Motivated by the Lie case, Drensky formulated the following conjecture:
\begin{conj}[\!\!\cite{Drensky2022}, Conjecture~$1$]
The polynomial identities of $M_2(K)$ over an infinite field of characteristic~$2$ do not follow from a finite number.    
\end{conj}

Although Drensky's conjecture concerns the $T$-ideal of all polynomial identities for $M_2(K)$, this note focuses on its low-degree multilinear components. In positive characteristic, these are logically distinct problems, since polynomial identities cannot, in general, be recovered from their multilinearizations; therefore, even a finite generating set for all multilinear identities would not necessarily provide a generating set for the full $T$-ideal.

Over an infinite field $K$ of characteristic $2$, the low-degree multilinear components of the  polynomial identities for $M_2(K)$ were investigated by Drensky and Tsiganchev \cite{Drensky&Tsiganchev1998} and, subsequently, by Asparouhov, Drensky, Koev, and Tsiganchev \cite{Asparouhov&Drensky&Koev&Tsiganchev2000}. In particular, a multilinear identity $f$ of degree $5$ for $M_2(K)$ was obtained in \cite{Asparouhov&Drensky&Koev&Tsiganchev2000}. However, to the best of our knowledge, an explicit generating set for all multilinear identities up to a certain bounded degree does not appear in the literature.
The aim of this note is to present a complete description of the multilinear identities of degrees up to $7$ for $M_2(K)$.
Our main result is as follows.

\begin{theorem}~\label{theo-01}
Let $K$ be an infinite field of characteristic~$2$. Every multilinear
polynomial identity of $M_2(K)$ of degree up to $7$ is a consequence
of the following three identities:
\begin{align}
 \mathrm{st}_4\left(x_1, x_2,x_3,x_4\right)&=\sum_{\sigma \in \Sym_4} \operatorname{sign}(\sigma) x_{\sigma(1)}x_{\sigma(2)}x_{\sigma(3)}x_{\sigma(4)},  \\  
 \mathfrak{S}(x_1,x_2,x_3,x_4,x_5)&=[[x_1,x_2][x_3,x_5],x_4]+[[x_1,x_3][x_4,x_5],x_2]+[[x_1,x_4][x_2,x_5],x_3],\label{ident-S} \\
\mathfrak{T}(x_1,x_2,x_3,x_4,x_5)
&=
[[x_1,x_2],x_3][x_4,x_5]+
[[x_1,x_2],x_4][x_5,x_3]+
[[x_1,x_2],x_5][x_3,x_4],\label{ident-T}
\end{align}
where $\Sym_4$ is the symmetric group acting on the set $\{1, 2,3,4\}$ and
$[x_1,x_2]=x_1x_2-x_2x_1$ is the commutator of $x_1,x_2$. Moreover, none of these identities can be omitted.
\end{theorem}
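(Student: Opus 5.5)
The argument splits into three parts: first, that $\mathrm{st}_4$, $\mathfrak{S}$ and $\mathfrak{T}$ are identities of $M_2(K)$; second, that they generate all multilinear identities of degree at most $7$; third, that none of them can be omitted.

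\textbf{Verifying the identities.} For $\mathrm{st}_4$ this is the Amitsur--Levitzki theorem, which holds over any commutative ring. For $\mathfrak{S}$ and $\mathfrak{T}$, both sides are multilinear, so it suffices to substitute matrix units $e_{ij}$ for the variables; this is a finite check that can be done by hand or by computer. A more conceptual route for $\mathfrak{T}$ uses $\mathfrak{sl}_2$: in characteristic $2$, commutators $[a,b]$ of $2\times 2$ matrices are traceless, and the product of two traceless matrices satisfies $uv+vu=\operatorname{tr}(uv)I$. I would use this to reduce $\mathfrak{T}$ and $\mathfrak{S}$ to cyclic-sum identities for the bilinear form $\operatorname{tr}(uv)$ and the bracket on $\mathfrak{sl}_2$. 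In characteristic $2$, $\mathfrak{sl}_2$ is nilpotent-like, with $[\mathfrak{sl}_2,\mathfrak{sl}_2]=KI$. This explains why $[[x_1,x_2],x_3]$ is governed by scalar-valued data.

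\textbf{Completeness.} Let $P_n$ be the multilinear component of degree $n$ and $I_n$ the multilinear identities of $M_2(K)$ of degree $n$. Since $K$ is infinite, it suffices to work over $\mathbb{F}_2$ (multilinear identities are defined over the prime field) and to compare dimensions. For each $n\le 7$ I compute $\dim P_n/I_n$ as the rank of the evaluation map $P_n\to M_2(\mathbb{F}_2[\text{generic entries}])$. This is equivalent to the rank of the matrix of evaluations of the $n!$ monomials on all tuples of matrix units $e_{ij}$, since multilinearity makes matrix units sufficient.

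Next I compute the dimension of $J_n$, the multilinear part of degree $n$ of the $T$-ideal generated by the three polynomials. It is spanned by elements $u\,g(v_1,\dots,v_k)\,w$, where $g$ is one of the generators, $u$ and $w$ are monomials, the $v_i$ are monomials, and all variables $x_1,\dots,x_n$ occur exactly once. For $n\le 7$ this is a finite linear-algebra computation over $\mathbb{F}_2$ with matrices of size at most $7!=5040$ columns. Showing $\dim J_n=n!-\dim P_n/I_n$ gives $J_n=I_n$.

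To make this less purely computational, I would use a spanning-set argument modulo $J_n$. Using $\mathrm{st}_4$ and $\mathfrak{S}$, $\mathfrak{T}$, I would reduce every monomial to a normal form built from products of commutators, in the spirit of Drensky's basis of the relatively free algebra. I would then show these normal forms are linearly independent on generic matrices. The main obstacle is this independence/spanning match in degrees $6$ and $7$. There characteristic-$2$ phenomena occur: squares of commutators are central, and there is no symmetrization via division by $2$. Here I would rely on the explicit rank computation.

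\textbf{Minimality.} For each generator $g$ I must show that $g$ does not lie in the $T$-ideal of the other two, restricted to multilinear parts:
\begin{itemize}
\item For $\mathrm{st}_4$, there are no identities of degree below $4$ among the others, so $\mathrm{st}_4$ cannot follow from the two degree-$5$ identities.
\item For $\mathfrak{S}$ versus $\mathfrak{T}$, the degree-$5$ consequences of $\mathrm{st}_4$ are spanned by $x_i\,\mathrm{st}_4$, $\mathrm{st}_4\,x_i$ and substitutions of $x_ix_j$ into $\mathrm{st}_4$. I show by a rank computation in $P_5$ that $\mathfrak{S}$ lies outside the span of these together with all $S_5$-permutations of $\mathfrak{T}$, and symmetrically for $\mathfrak{T}$.
\end{itemize}
Alternatively, I would find an algebra, for instance a suitable subalgebra of upper triangular or Grassmann-type matrices in characteristic $2$, satisfying two of the identities but not the third.
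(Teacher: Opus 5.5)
Your plan is correct and is essentially the paper's proof. You check that $\mathrm{st}_4,\mathfrak S,\mathfrak T$ are identities, show by computer that the multilinear part of $\langle \mathrm{st}_4,\mathfrak S,\mathfrak T\rangle^T$ has the same dimension as $\mathcal P_n\cap\Id(M_2(K))$ for $n=5,6,7$ (the paper gets $30$, $380$, $3794$), and obtain minimality from degree-$5$ rank counts plus the degree argument for $\mathrm{st}_4$. The differences are only in implementation: the paper verifies $\mathfrak S,\mathfrak T$ with the reduced Asparouhov--Drensky--Koev--Tsiganchev criterion ($x_1=E_{11}$, other $x_i\in\{E_{11},E_{12},E_{21}\}$) and gets $\dim(\mathcal P_n\cap\Id(M_2(K)))$ from proper identities via $\sum_k\binom nk\dim(\Gamma_k\cap\Id(M_2(K)))$, whereas your direct evaluation of all monomials on matrix units is equally valid (your $\mathfrak{sl}_2$ heuristic is not needed, and its claim about $[[x_1,x_2],x_3]$ is off, e.g.\ $[[E_{11},E_{12}],E_{11}]=E_{12}$).
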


We also compare the polynomials $\mathfrak S$ and $\mathfrak T$ with the identity $f$ obtained in \cite{Asparouhov&Drensky&Koev&Tsiganchev2000}. We prove that $f$ is a consequence of $\mathfrak S$ and, furthermore, that $\mathfrak S$ is equivalent to $f$ modulo the polynomial $\mathfrak T$. Consequently, $\{\operatorname{st}_4,f,\mathfrak T\}$ forms an alternative minimal generating set for the multilinear identities of degree up to $7$.

\section{Preliminaries}\label{section-preliminaries}

Throughout this note, unless otherwise stated, $K$ denotes an
infinite field of characteristic $2$.
All vector spaces and algebras are over $K$, and all algebras are
assumed to be associative.

We write $K\langle x_1, \ldots, x_n \rangle$ for the free algebra with free generators $x_1,\ldots, x_n$. In case the set of free generators is infinite and enumerable, and denoted by
$X = \{x_1, x_2, \ldots \}$, the corresponding free algebra is denoted by $K\langle X \rangle$.
Recall that a polynomial 
$f(x_1, \ldots, x_n)$ in $K\langle X \rangle$ is a \textit{polynomial identity}
for an
algebra $\A$ if $f(a_1, \ldots, a_n) = 0$ for all $a_1, \ldots, a_n\in \A$.
If $\A$ satisfies 
a nontrivial polynomial
identity, we call $\A$ an
\textit{algebra with polynomial identity (PI-algebra)}. The set 
$\Id_{K}(\A) = \Id(\A)$ of
all polynomial identities for $\A$ is a 
T-ideal, i.e., $\Id(\A)$ is an ideal of 
$K\langle X \rangle$ such that
$\phi(\Id(\A))\subseteq \Id(\A)$ for every endomorphism $\phi$ of $K\langle X \rangle$.
A T-ideal $I$ of $K\langle X \rangle$ generated by the polynomials 
$f_1, \ldots, f_{k} \in K\langle X \rangle$ 
is the minimal T-ideal of $K\langle X \rangle$ that contains 
$f_1, \ldots, f_k$. 
Denote
$I = \langle f_1, \ldots , f_k\rangle^{\mathrm{T}}$. We say that $f\in K\langle X \rangle$ is a consequence of $f_1, \ldots, f_k$ if $f\in I$.

For $x_1,x_2\in K\langle X\rangle$, we define the commutator $[x_1,x_2]=x_1x_2-x_2x_1$
and recursively
\[
[x_1,\ldots,x_n]
=
[[x_1,\ldots,x_{n-1}],x_n].
\]
Denote by $\mathcal P_n$ the space
of multilinear polynomials of degree $n$  in the free algebra $K\langle X \rangle$ and by $\mathcal{P}_n(\A)$ the quotient space
\[
\mathcal P_n(\A)=\frac{\mathcal P_n}{\mathcal{P}_n \cap \operatorname{Id}(\A)}.
\]
A polynomial $f\in K\langle X \rangle$ is called \textit{proper} if it is a
linear combination of products of commutators 
\[
f\left(x_1, \ldots, x_m\right)=\sum \alpha_{i, \ldots, j}\left[x_{i_1}, \ldots, x_{i_p}\right] \cdots\left[x_{j_1}, \ldots, x_{j_q}\right], \quad \alpha_{i, \ldots, j} \in K .
\]
We
denote by $\Gamma_n$ the space of \textit{multilinear proper polynomials of degree $n$} and by $\Gamma_n(\A)$ the quotient space
\[
\Gamma_n(\A)=\dfrac{\Gamma_n}{\Gamma_n\cap \Id(\A)}.
\]
Recall that the \textit{standard polynomial of degree
$n$} is
\[
\mathrm{st}_n\left(x_1, \ldots, x_n\right)=\sum_{\sigma \in \Sym_n} \operatorname{sign}(\sigma) x_{\sigma(1)} \cdots x_{\sigma(n)},
\]
where $\Sym_n$ is the symmetric group acting on the set $\{1, 2, \ldots , n\}$. 
By the Amitsur--Levitzki theorem~\cite{Amitsur&Levitzki1950},
$\mathrm{st}_4(x_1,x_2,x_3,x_4)$ is a polynomial identity of $M_2(K)$.
Moreover, $M_2(K)$ has no nonzero multilinear identity of degree less
than~$4$, and its space of multilinear identities of degree~$4$ is
spanned by $\mathrm{st}_4$.

\subsection*{The polynomial identities $\mathfrak{S}$ and $\mathfrak{T}$}

We recall a reduction criterion that will be used to prove that
$\mathfrak S$ and $\mathfrak T$ are polynomial identities of $M_2(K)$.
The following result is Lemma 5.1 from \cite{Asparouhov&Drensky&Koev&Tsiganchev2000}, presented in
our notation.

\begin{lemma}[\!\!\cite{Asparouhov&Drensky&Koev&Tsiganchev2000}]\label{lemma-generic-matrix-criterion}
Let $f(x_1,\ldots,x_m)\in\mathbb F_2\langle X\rangle$
be a proper multihomogeneous polynomial such that $\deg_{x_i}f=1$,  $i\geqslant 2$.
Then $f$ is a polynomial identity of the algebra
$R_m(\mathbb F_2)$ of generic $2\times2$ matrices if and only if $f(E_{11},z_2,\ldots,z_m)=0$
for all $z_i\in\{E_{11},E_{12},E_{21}\},$ $i=2,\ldots,m.$
\end{lemma}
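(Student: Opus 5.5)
The statement to prove is Lemma~\ref{lemma-generic-matrix-criterion}, the reduction criterion quoted from \cite{Asparouhov&Drensky&Koev&Tsiganchev2000}. One direction is trivial: if $f$ vanishes on generic matrices, it vanishes on any specialization, in particular on matrix units. For the converse, the plan is to combine proper-ness, multilinearity in $x_2,\dots,x_m$, and invariance under conjugation.

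\textbf{Step 1: remove the identity component.} Since $f$ is proper, it is a linear combination of products of commutators, so $f$ is unchanged when any $x_i$ is replaced by $x_i+\lambda I$. Each generic matrix $y_i$ can therefore be replaced by a traceless one, or more conveniently by $y_i-(y_i)_{22}I$, which has zero $(2,2)$-entry. Multilinearity in $x_i$ for $i\geqslant 2$ then reduces the evaluation at $x_2,\dots,x_m$ to the span of $E_{11},E_{12},E_{21}$.

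\textbf{Step 2: normalize the first variable.} Work over the generic field, or over a field $L\supseteq \mathbb F_2$ in which $f$ is a polynomial map with coefficients in $\mathbb F_2$. Since $f$ is an identity of $R_m(\mathbb F_2)$ if and only if it is an identity of $M_2(L)$ for infinite $L$, it suffices to check all substitutions over an infinite field $L$ of characteristic~$2$.
\begin{itemize}
\item By Zariski density, we may assume $x_1=a$ is a matrix over an extension of $L$ with distinct eigenvalues.
\item Conjugating by $\mathrm{GL}_2$ (allowed, since identities are conjugation-invariant) diagonalizes $a=\mathrm{diag}(\alpha,\beta)$.
\item Subtracting $\beta I$ (by proper-ness) gives $a=(\alpha-\beta)E_{11}$.
\end{itemize}
Multihomogeneity in $x_1$ of degree $d$ then gives $f(a,\dots)=(\alpha-\beta)^d f(E_{11},\dots)$. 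The conjugation must be done before the Step~1 shifts, but shifts and conjugation commute with the property of being a proper polynomial value, so this ordering is fine.

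\textbf{Step 3: handle the remaining variables.} For $x_2,\dots,x_m$, which are now arbitrary matrices after conjugation, apply Step~1's shift followed by linear expansion. By multilinearity, $f(E_{11},b_2,\dots,b_m)$ is a sum of coefficient products times $f(E_{11},z_2,\dots,z_m)$ with $z_i\in\{E_{11},E_{12},E_{21}\}$. These are all zero by hypothesis, so $f$ vanishes on a Zariski-dense set, hence identically.

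\textbf{Main obstacle.} The main obstacle is justifying the density and field-extension step carefully in characteristic~$2$. The required facts are:
\begin{itemize}
\item matrices with distinct eigenvalues form a nonempty Zariski-open set over an infinite field, with eigenvalues possibly in a quadratic (possibly inseparable) extension;
\item inseparability is avoided by requiring a nonzero discriminant $\operatorname{tr}(a)^2-4\det a=\operatorname{tr}(a)^2$, i.e.\ $\operatorname{tr}a\ne 0$, which is still open and nonempty;
\item the scalar-shift invariance of proper polynomials holds for all $i$, including $x_1$ even though $x_1$ may have higher degree, because commutators kill scalars.
\end{itemize}
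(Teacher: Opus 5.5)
Your proof is correct. The paper does not prove this lemma itself; it imports it from \cite{Asparouhov&Drensky&Koev&Tsiganchev2000}. Your argument therefore supplies what the note takes on trust, and it follows the standard pattern:
\begin{enumerate}
\item proper polynomials are invariant under central shifts;
\item on the set where $\operatorname{tr}x_1\neq 0$, you conjugate $x_1$ to diagonal form (in characteristic $2$ this is precisely the condition that $t^2+\operatorname{tr}(x_1)\,t+\det x_1$ is separable);
\item homogeneity in $x_1$ passes from $(\alpha-\beta)E_{11}$ to $E_{11}$;
\item linearity in $x_2,\dots,x_m$ finishes the job once their $(2,2)$-entries are normalized to zero.
\end{enumerate}

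The density step is cleanest stated algebraically. The product $\operatorname{tr}(X_1)\,f(X_1,\dots,X_m)$ vanishes on $M_2(L)^m$ for an infinite field $L$, hence is zero. Since the polynomial ring is a domain and $\operatorname{tr}(X_1)\neq 0$, this gives $f(X_1,\dots,X_m)=0$. Alternatively, you can diagonalize the generic matrix $X_1$ itself over an algebraic closure of the rational function field; its characteristic polynomial is separable because $\operatorname{tr}X_1\neq 0$. This removes the density argument entirely.

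There is one slip in Step 1. In characteristic $2$ you cannot make a matrix traceless by a scalar shift, since $\operatorname{tr}(y+\lambda I)=\operatorname{tr}(y)+2\lambda=\operatorname{tr}(y)$. This is harmless, because you actually use the $(2,2)$-entry normalization instead. It is also exactly why the condition $\operatorname{tr}x_1\neq 0$ survives the shift of $x_1$ by $\beta I$.

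Finally, your remark about the ordering of conjugation and shifts is unnecessary. You only shift $x_2,\dots,x_m$ after conjugating, and nothing more is required.
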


Let $R_m(K)$ be the algebra generated by the generic $2 \times 2$ matrices $X_1,\ldots,X_m$. Since the polynomials considered below have
coefficients in the prime field \(\mathbb F_2\), an equality $f(X_1,\ldots,X_m)=0$
in \(R_m(\mathbb F_2)\) remains valid after scalar extension from
\(\mathbb F_2\) to \(K\).
Moreover, $R_{m}(K)$
is
isomorphic to the relatively free algebra of the variety generated by
$M_2(K)$~\cite{drensky2004polynomial}.
Consequently, for
\(f\in K\langle x_1,\ldots,x_m\rangle\), we have
\[
f\in\operatorname{Id}(M_2(K))
\quad\Longleftrightarrow\quad
f(X_1,\ldots,X_m)=0 \text{ in $R_m(K)$}.
\]
Therefore, Lemma~\ref{lemma-generic-matrix-criterion} provides a valid criterion for the polynomials
considered below.

\begin{lemma}\label{lemma-identities-M_2}
The polynomials~\eqref{ident-S} and~\eqref{ident-T} are identities for $M_2(K)$.
\end{lemma}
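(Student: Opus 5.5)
By the discussion preceding the statement, it is enough to show that $\mathfrak S$ and $\mathfrak T$ vanish on the generic matrices over $\mathbb F_2$. We then extend scalars to $K$ and use that $R_m(K)$ is the relatively free algebra of $M_2(K)$. Both polynomials have coefficients in $\mathbb F_2$ and are multilinear in $x_1,\ldots,x_5$. Each is a sum of products of commutators, or a commutator of such a product with a variable, so each is proper. Hence Lemma~\ref{lemma-generic-matrix-criterion} applies with $m=5$. It reduces the claim to the finite check $f(E_{11},z_2,\ldots,z_5)=0$ for every $z_i\in\{E_{11},E_{12},E_{21}\}$ and $f\in\{\mathfrak S,\mathfrak T\}$. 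Each polynomial needs $3^4=81$ substitutions, which we cut down as follows.

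First, we tabulate the commutators of the three matrices in characteristic $2$:
\begin{align*}
[E_{11},E_{12}]&=E_{12}, & [E_{11},E_{21}]&=E_{21}, & [E_{12},E_{21}]&=E_{11}+E_{22}=I,
\end{align*}
and $[a,a]=0$. The crucial observation is that $[E_{12},E_{21}]$ is the identity matrix. Consequently any commutator with a factor equal to it vanishes, and so does any outer commutator of a scalar product. We also use that signs play no role.

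For $\mathfrak T$, the first factor $[x_1,x_2]$ forces $z_2\in\{E_{12},E_{21}\}$. We use the transpose anti-automorphism, which fixes $E_{11}$ and swaps $E_{12},E_{21}$, to reduce to $z_2=E_{12}$. One has to check that $\mathfrak T$ maps to itself up to reordering; if this is awkward, we simply treat $z_2=E_{21}$ as a second symmetric batch. Then $[E_{11},E_{12}]=E_{12}$, and $[E_{12},z]$ equals $E_{12}$, $0$, $I$ for $z=E_{11},E_{12},E_{21}$ respectively. The polynomial $\mathfrak T$ is invariant under cyclic permutation of $(x_3,x_4,x_5)$, so only the orbits of triples $(z_3,z_4,z_5)$ under the $3$-cycle need checking, which is $11$ orbits. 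Each term is a product of two matrices read off the table, and summing over the cycle gives $0$ in every case.

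For $\mathfrak S$, the three summands are permuted cyclically by the cycle $(x_2\,x_3\,x_4)$ with $x_1,x_5$ fixed. We therefore fix $z_5$ and run over the $11$ cyclic orbits of $(z_2,z_3,z_4)$, for $33$ cases in total. In each summand $[[x_1,x_a][x_b,x_5],x_c]$, the inner product is $0$ unless $z_a\neq E_{11}$ and $z_b\neq z_5$. It is central, hence killed by the outer commutator, whenever one factor is $I$ or the product lands in $KI$. This eliminates most cases immediately. The remaining ones are short computations with matrix units, for instance $E_{12}E_{21}=E_{11}$ followed by a commutator with $E_{12}$ or $E_{21}$, and the three summands cancel in pairs.

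The main obstacle is not conceptual but bookkeeping: the case analysis for $\mathfrak S$, where cancellations happen only between different summands and must be tracked carefully. I would organize it by the value of $z_5$, and if needed confirm the whole table by a direct computer evaluation over $\mathbb F_2$, which is legitimate since the criterion is a finite check.
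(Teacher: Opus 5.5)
Your overall route is the paper's. You reduce via Lemma~\ref{lemma-generic-matrix-criterion} to $x_1=E_{11}$ and $x_2,\dots,x_5\in\{E_{11},E_{12},E_{21}\}$, tabulate the commutators in characteristic $2$, and use the symmetry of $\mathfrak T$ in $(x_3,x_4,x_5)$ and the cyclic symmetry of $\mathfrak S$ in $(x_2,x_3,x_4)$.

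The gap is that the lemma's whole content is this finite check, which you assert rather than perform, and the one concrete pruning rule you give for $\mathfrak S$ is false. You claim the inner product $[x_1,x_a][x_b,x_5]$ is central whenever one factor is $I$. But if $[z_b,z_5]=I$, the product is $[E_{11},z_a]\in\{E_{12},E_{21}\}$, which is not central, and $[[E_{11},z_a],z_c]\neq 0$ unless $z_c=z_a$. Your other clause is vacuous, because the inner products only take the values $0,E_{12},E_{21},E_{11},E_{22}$.

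The terms with a factor $I$ are exactly where the cancellations occur. Take $z_5=E_{12}$ and $(z_2,z_3,z_4)=(E_{11},E_{12},E_{21})$. The first summand of $\mathfrak S$ is $0$, while $[[x_1,x_3][x_4,x_5],x_2]$ and $[[x_1,x_4][x_2,x_5],x_3]$ evaluate to
\[
[E_{12}\cdot I,E_{11}]=E_{12}\qquad\text{and}\qquad [E_{21}E_{12},E_{12}]=[E_{22},E_{12}]=E_{12}.
\]
Discarding the first of these by your rule leaves the nonzero total $E_{12}$, so the check would appear to fail. In other cases, e.g.\ $z_5=E_{12}$, $(z_2,z_3,z_4)=(E_{12},E_{21},E_{21})$, both nonzero summands equal $I$ and both contain the factor $I$, so your rule returns $0$ for the wrong reason.

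To repair it, prune only when $z_a=E_{11}$, $z_b=z_5$, or $[E_{11},z_a]=[z_b,z_5]\in\{E_{12},E_{21}\}$. Evaluate every term with $[z_b,z_5]=I$ as $[[E_{11},z_a],z_c]$. Then actually tabulate the cyclic representatives, as the paper does.

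Your transpose reduction for $\mathfrak T$ also fails as stated. Transposition is an anti-automorphism, so it maps $[[A_1,A_2],A_3][A_4,A_5]$ to $[A_4^t,A_5^t][[A_1^t,A_2^t],A_3^t]$. The image of $\mathfrak T$ is therefore the reversed polynomial, which is not $\mathfrak T$: it contains the monomial $x_4x_5x_1x_2x_3$, which $\mathfrak T$ does not. So transposition does not identify the $z_2=E_{21}$ cases with the $z_2=E_{12}$ cases for $\mathfrak T$ itself. Your fallback of checking $z_2=E_{21}$ as a separate batch is what the paper does. The paper also uses that $\mathfrak T$ is fully symmetric in $x_3,x_4,x_5$ in characteristic $2$, which leaves $10$ unordered triples instead of your $11$ cyclic orbits; that difference is immaterial.
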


\begin{proof}
First, both $\mathfrak{S}$ and $\mathfrak{T}$ are proper multilinear polynomials of degree~$5$.
By Lemma~\ref{lemma-generic-matrix-criterion}, it is enough to verify the
evaluations
\[
x_1=E_{11},\qquad x_2,x_3,x_4,x_5\in\{E_{11},E_{12},E_{21}\}.
\]
Set
\[
a=E_{11},\qquad b=E_{12},\qquad c=E_{21},\qquad I=E_{11}+E_{22}.
\]
Using
$E_{ij}E_{kl}=\delta_{jk}E_{il}$, we obtain
\[
\begin{array}{c|ccc}
[\ ,\ ] & a & b & c \\ \hline
a & 0 & b & c\\
b & b & 0 & I\\
c & c & I & 0
\end{array}.
\]
First consider
\[
\mathfrak{T}(x_1,x_2,x_3,x_4,x_5)
=
[x_1,x_2,x_3][x_4,x_5]+
[x_1,x_2,x_4][x_5,x_3]+
[x_1,x_2,x_5][x_3,x_4].
\]
Putting \(x_1=a\), we have
\[
[a,x_2]\in\{0,b,c\}.
\]
If \([a,x_2]=0\), then \(\mathfrak{T}(a,x_2,x_3,x_4,x_5)=0\). Hence it remains to consider
\([a,x_2]=b\) and \([a,x_2]=c\).

The polynomial $\mathfrak{T}$ is symmetric in the variables \(x_3,x_4,x_5\), since a
permutation of these variables only permutes its three summands. Thus
it is enough to consider unordered triples \((x_3,x_4,x_5)\) with entries in
\(\{a,b,c\}\).

For \([a,x_2]=b\), the only nonzero cases are
\[
\begin{array}{c|c}
(x_3,x_4,x_5) & 
[b,x_3][x_4,x_5]+[b,x_4][x_5,x_3]+[b,x_5][x_3,x_4] \\ \hline
(a,a,c) & a+a\\
(a,b,c) & b+b\\
(a,c,c) & c+c\\
(b,c,c) & I+I
\end{array}
\]
and all these values are zero in characteristic \(2\). All remaining
unordered triples give zero term by term.

The case \([a,x_2]=c\) is analogous. The only nonzero cases are
\[
\begin{array}{c|c}
(x_3,x_4,x_5) & 
[c,x_3][x_4,x_5]+[c,x_4][x_5,x_3]+[c,x_5][x_3,x_4]  \\ \hline
(a,a,b) & E_{22}+E_{22}\\
(a,b,b) & b+b\\
(a,b,c) & c+c\\
(b,b,c) & I+I
\end{array}
\]
and again all values are zero. Hence $\mathfrak{T}(a,x_2,x_3,x_4,x_5)=0$
for all \(x_2,x_3,x_4,x_5\in\{a,b,c\}\).
Now consider
\[
\mathfrak{S}(x_1,x_2,x_3,x_4,x_5)=[[x_1,x_2][x_3,x_5],x_4]+[[x_1,x_3][x_4,x_5],x_2]+[[x_1,x_4][x_2,x_5],x_3].
\]
Putting \(x_1=a\), write the three summands as
\[
L_1=[[a,x_2][x_3,x_5],x_4],\qquad
L_2=[[a,x_3][x_4,x_5],x_2],\qquad
L_3=[[a,x_4][x_2,x_5],x_3].
\]
The polynomial $\mathfrak{S}$ is invariant under the cyclic permutation $(x_2,x_3,x_4)\mapsto (x_3,x_4,x_2).$
Therefore it is enough to consider representatives of the cyclic
orbits of triples $(x_2,x_3,x_4)\in\{a,b,c\}^3$.

The following table lists all representatives for which at least one
of \(L_1,L_2,L_3\) is nonzero. All omitted representatives give
\(L_1=L_2=L_3=0\).
\[
\begin{array}{c|c|c}
x_5 & (x_2,x_3,x_4) & (L_1,L_2,L_3) \\ \hline
a & (b,b,c) & (0,b,b)\\
a & (b,c,c) & (c,0,c)\\
b & (a,b,c) & (0,b,b)\\
b & (a,c,c) & (0,c,c)\\
b & (b,c,c) & (I,I,0)\\
c & (a,b,b) & (0,b,b)\\
c & (a,c,b) & (0,c,c)\\
c & (b,b,c) & (I,0,I)
\end{array}
\]
In every row the sum \(L_1+L_2+L_3\) is zero in characteristic \(2\).
Thus,  $\mathfrak{S}(a,x_2,x_3,x_4,x_5)=0$
for all admissible substitutions.
By Lemma~\ref{lemma-generic-matrix-criterion} and the scalar extension argument above, both $\mathfrak{S}$ and $\mathfrak{T}$ are
polynomial identities of $M_2(K)$.
\end{proof}

\begin{remark}
It is well known that the Hall identity
\[
[[x,y]^2,z]=0
\]
holds in the algebra $M_2(K)$ over any field. We observe that this identity is a consequence of~$\mathfrak S$.
Indeed, note that
\[
\mathfrak S(x,x,z,y,y)
=
[[x,x][z,y],y]
+
[[x,z][y,y],x]
+
[[x,y][x,y],z]=
[[x,y]^2,z],
\]
since $[x,x]=0$ and $[y,y]=0$. Therefore, $[[x,y]^2,z]\in \langle \mathfrak S\rangle^T.$
\end{remark}

\section{Proof of Theorem~\ref{theo-01}}\label{section-proof-thrm}

The computer algebra calculations in
the proofs of
Proposition~\ref{prop-02} and Theorem~\ref{theo-01} were carried out using Albert software~\cite{Jacobs1994Albert, Albert4M6}. More precisely, we computed the dimensions of the multilinear components of the relatively free algebras defined by the identities under consideration.

\begin{prop}\label{prop-02}
The polynomial $\mathfrak T$ is not a consequence of
$\mathrm{st}_4$ and $\mathfrak S$, and the polynomial
$\mathfrak S$ is not a consequence of
$\mathrm{st}_4$ and $\mathfrak T$.
\end{prop}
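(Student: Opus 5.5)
Both non-implications concern the degree-$5$ multilinear component. Since $\mathrm{st}_4$, $\mathfrak S$ and $\mathfrak T$ are multilinear of degree at most $5$, the degree-$5$ multilinear part of $\langle \mathrm{st}_4,\mathfrak S\rangle^T$ is spanned by explicit elements. These are the polynomials $u\,\mathrm{st}_4(\ldots)\,v$ with $u,v$ monomials, $\mathfrak S$ with its variables permuted, and $\mathrm{st}_4$ with one argument replaced by a product $x_ix_j$. Multilinear consequences of degree $5$ cannot come from substitutions of higher degree, nor from non-linear substitutions followed by linearization, because we only need membership in $\mathcal P_5$. More precisely, the multilinear degree-$5$ component of a T-ideal generated by multilinear polynomials of degree $\le 5$ is exactly the $\Sym_5$-module and $K\langle X\rangle$-bimodule span of the generators and of their substitutions of the above form. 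I would state this reduction first, since it holds in any characteristic.

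The plan is then a finite linear-algebra computation in $\mathcal P_5$, which has dimension $120$. For the first claim, build the spanning set $W_1$ of the degree-$5$ multilinear part of $\langle \mathrm{st}_4,\mathfrak S\rangle^T$. It consists of $x_{\sigma(5)}\mathrm{st}_4$ and $\mathrm{st}_4\, x_{\sigma(5)}$ over the five choices of the outside variable, $\mathrm{st}_4$ with one entry replaced by $x_ix_j$ over all ordered pairs, and the $120$ permuted copies of $\mathfrak S$. Compute $\dim_{\mathbb F_2}\operatorname{span}W_1$. Then show $\mathfrak T\notin\operatorname{span}W_1$, equivalently that adding $\mathfrak T$ increases the rank. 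Do the same with $W_2$ built from $\mathrm{st}_4$ and $\mathfrak T$, testing $\mathfrak S$. All coefficients lie in $\mathbb F_2$, so ranks over $\mathbb F_2$ equal ranks over $K$, since rank is preserved under field extension. Hence a computation over $\mathbb F_2$ decides the question over $K$. In practice this is exactly what Albert does: it computes the dimension of $\mathcal P_5$ in the relatively free algebra of each set of identities. I would report that $\dim\mathcal P_5$ modulo $\{\mathrm{st}_4,\mathfrak S\}$ strictly exceeds $\dim\mathcal P_5$ modulo $\{\mathrm{st}_4,\mathfrak S,\mathfrak T\}$, and similarly with the roles of $\mathfrak S$ and $\mathfrak T$ swapped.

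To make the argument less dependent on the machine, I would try to reduce to proper polynomials. Since $\mathfrak S$ and $\mathfrak T$ are proper, and in characteristic $2$ over an infinite field one still has $\mathcal P_n\cong\bigoplus_k \binom{n}{k}\Gamma_k$ as vector spaces, one can work in $\Gamma_5$ modulo the consequences of $\mathrm{st}_4$. There I would look for a linear functional that vanishes on all consequences of $\mathrm{st}_4$ and $\mathfrak S$ but not on $\mathfrak T$, for example a suitable coefficient extraction in a normal-form basis of $\Gamma_5$. The main obstacle is this step: certifying that $\mathfrak T$ (respectively $\mathfrak S$) lies outside a span of a few hundred polynomials. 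By hand this is impractical, so I would rely on the rank computation and, if possible, extract an explicit separating functional as a human-checkable certificate.
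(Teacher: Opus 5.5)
Your proposal is correct and follows essentially the paper's argument: a rank computation in the $120$-dimensional space $\mathcal P_5$, done with Albert. That computation gives $\dim \mathcal P_5/(\mathcal P_5\cap\langle \mathrm{st}_4,\mathfrak S\rangle^{T})=91=\dim \mathcal P_5/(\mathcal P_5\cap\langle \mathrm{st}_4,\mathfrak T\rangle^{T})$ against $90$ for all three generators, so the strict inequalities you anticipate do hold. Your explicit spanning set for the degree-$5$ component and your remark that ranks over $\mathbb F_2$ agree with ranks over $K$ make explicit what the paper leaves implicit, and the reduction to $\Gamma_5$ in your last paragraph is not needed.
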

\begin{proof}
Using the software ``Albert"~\cite{Albert4M6, Jacobs1994Albert}, we obtain
\[
\dim \frac{\mathcal{P}_5}{\mathcal{P}_5\cap\langle \mathrm{st}_4,\mathfrak S\rangle^{\mathrm{T}}}=91,
\qquad
\dim \frac{\mathcal{P}_5}{\mathcal{P}_5\cap\langle\mathrm{st}_4,\mathfrak S,\mathfrak T\rangle^{\mathrm{T}}}=90.
\]
Hence $\mathfrak T$ is not a consequence of $\mathrm{st}_4$ and $\mathfrak S$.
Similarly, by ``Albert" we get
\[
\dim \frac{\mathcal{P}_5}{\mathcal{P}_5\cap\langle\mathrm{st}_4,\mathfrak T\rangle^{\mathrm{T}}}=91.
\]
Therefore, $\mathfrak S$ is not a consequence of $\mathrm{st}_4$ and $\mathfrak T$.
Consequently, $\mathfrak S$ and $\mathfrak T$ are independent modulo the multilinear
consequences of $\mathrm{st}_4$.
\end{proof}

Following the ideas from~\cite{Asparouhov&Drensky&Koev&Tsiganchev2000}, we developed a Python implementation to compute the dimensions of the spaces
\[
\Gamma_n\cap\Id(M_2(K)).
\]
See~\cite{Ferreira2026ProperIdentities2025} for the computer
program to assist in the proof of Lemma~\ref{lemma-dim-P6}.

\begin{lemma}\label{lemma-dim-P6}
Let $K$ be an infinite field of characteristic $2$. Then
\begin{itemize}
\item 
$\dim_{K}(\mathcal{P}_5\cap \Id(M_2(K)))
=
30;$
    \item $\operatorname{dim}_K\left(\mathcal{P}_6 \cap \Id(M_2(K))\right)=380;$
    \item 
    $\operatorname{dim}_K\left(\mathcal{P}_7 \cap \Id(M_2(K))\right)=3794$.
\end{itemize}
\end{lemma}

\begin{proof}
First, by~\cite[Lemma~$6$]{Drensky&Tsiganchev1998}, we have
\begin{equation}\label{eq-formula-dim-P}
\operatorname{dim}_K\left(\mathcal{P}_n \cap \Id(M_2(K))\right)=\sum_{k=0}^n\binom{n}{k} \operatorname{dim}_K\left(\Gamma_k \cap \Id(M_2(K))\right).
\end{equation}
Further, using a Python program~\cite{Ferreira2026ProperIdentities2025} we obtain that 
\[
\dim_{K}(\Gamma_5\cap \Id(M_2(K)))=25, 
\]
\[\dim_{K}(\Gamma_6\cap \Id(M_2(K)))=215, \]
\[\dim_{K}(\Gamma_7\cap \Id(M_2(K)))=1729.
\]
It is known that up to degree 4, the polynomial $\mathrm{st}_4(x_1,x_2,x_3,x_4)$ is the only proper multilinear identity of $M_2(K)$. Consequently, by~\eqref{eq-formula-dim-P} we obtain
\[
\dim_{K}(\mathcal{P}_5\cap\Id(M_2(K)))=\binom{5}{4}\cdot 1+\binom{5}{5}\cdot 25=5+25=30;
\]
\[
\dim_{K}(\mathcal{P}_6\cap\Id(M_2(K)))=\binom{6}{4}\cdot 1+\binom{6}{5}\cdot 25+\binom{6}{6}\cdot215=15+150+215=380;
\]
\begin{multline*}
\dim_K\left(\mathcal{P}_7 \cap \Id(M_2(K))\right)
=\binom{7}{4}\cdot1+\binom{7}{5}\cdot 25+\binom{7}{6}\cdot 215+\binom{7}{7}\cdot 1729=\\
=35+525+1505+1729=3794.
\end{multline*}
This completes the proof.
\end{proof}

\begin{remark}
The calculation in the proof of Lemma~\ref{lemma-dim-P6} was performed by a Python program that implements the following steps:
\begin{enumerate}
\item construct the Specht basis of $\Gamma_n$~\cite[Theorem~$4.3.9$]{Drensky2000};
\item form a generic element of $\Gamma_n$;
\item evaluate it on all substitutions of the basis matrices
$\{E_{11},E_{12},E_{21},E_{22}\}$ of $M_2(K)$;
\item compute the rank of the resulting homogeneous linear system.
\end{enumerate}
The implementation was validated by reproducing the dimensions in degrees $5$ and $6$ obtained in~\cite{Asparouhov&Drensky&Koev&Tsiganchev2000,Drensky&Tsiganchev1998}.
\end{remark}

\begin{proof_of}{of Theorem~\ref{theo-01}.}
Let $\mathcal I=\langle \mathrm{st}_4,\mathfrak S,\mathfrak T\rangle^T$
be the T-ideal generated by $\mathrm{st}_4,\mathfrak S$ and $\mathfrak T$, and put $W_n=\mathcal{P}_n\cap \mathcal{I}.$
By the Amitsur--Levitzki theorem~\cite{Amitsur&Levitzki1950}, $\mathrm{st}_4$ is a polynomial identity of $M_2(K)$. Moreover, by Lemma~\ref{lemma-identities-M_2}, the polynomials $\mathfrak S$ and $\mathfrak T$ are also identities of $M_2(K)$. Hence $\mathcal I\subseteq \operatorname{Id}(M_2(K)),$
and therefore
\[
W_n\subseteq \mathcal{P}_n\cap \operatorname{Id}(M_2(K))
\]
for every $n$.
It is well known that $M_2(K)$ has no nonzero multilinear identities of degree less than $4$, and that the multilinear identities of degree $4$ are generated by the standard polynomial~$\mathrm{st}_4$.
It remains to consider degrees $5,6$ and $7$. By computations with ``Albert"~\cite{Albert4M6, Jacobs1994Albert}, we obtain
\[
\dim_K\frac{\mathcal P_5}{W_5}=90,\qquad
\dim_K\frac{\mathcal P_6}{W_6}=340,\qquad
\dim_K\frac{\mathcal P_7}{W_7}=1246.
\]
Since $\dim_K \mathcal P_n=n!$, this gives
\[
\dim_K W_5=5!-90=30,
\]
\[
\dim_K W_6=6!-340=380,
\]
and
\[
\dim_K W_7=7!-1246=3794.
\]
On the other hand, by Lemma~\ref{lemma-dim-P6},
\[
\dim_K(\mathcal P_5\cap \operatorname{Id}(M_2(K)))=30,
\]
\[
\dim_K(\mathcal P_6\cap \operatorname{Id}(M_2(K)))=380,
\]
and
\[
\dim_K(\mathcal P_7\cap \operatorname{Id}(M_2(K)))=3794.
\]
Thus, for $n=5,6,7$, the inclusion
\[
W_n\subseteq \mathcal P_n\cap \operatorname{Id}(M_2(K))
\]
is an inclusion of vector spaces with the same dimension. Therefore,
\[
W_n=\mathcal P_n\cap \operatorname{Id}(M_2(K))
\]
for $n=5,6,7$. Thus, we conclude that all multilinear identities of degree at most $7$ follow from $\mathrm{st}_4,\mathfrak S$ and $\mathfrak T$.

It remains to prove that this generating set is minimal. By Proposition~\ref{prop-02} neither $\mathfrak S$ nor $\mathfrak T$ can be omitted.
Finally, $\mathrm{st}_4$ is not a consequence of $\mathfrak S$ and $\mathfrak T$. Indeed, $\mathfrak S$ and $\mathfrak T$ are multilinear proper polynomials of degree $5$. Since $\mathrm{st}_4$ spans 
$\mathcal P_4\cap \operatorname{Id}(M_2(K))$, it is not a consequence of $\mathfrak S$ and $\mathfrak T$. Thus the set $\{\mathrm{st}_4,\mathfrak S,\mathfrak T\}$
is minimal.
\end{proof_of}

\begin{prop}\label{prop-01}
Let $f$ be the multilinear identity of degree~$5$ for $M_2(K)$ found in~\cite{Asparouhov&Drensky&Koev&Tsiganchev2000}, namely
\begin{multline*}
f(x_1, x_2, x_3, x_4, x_5)= \left[x_5, x_4\right]\left[x_2, x_1, x_3\right]+\left[x_3, x_1, x_2\right]\left[x_5, x_4\right] +\\ 
 +\left[x_4, x_1, x_3\right]\left[x_5, x_2\right]+\left[x_3, x_1\right]\left[x_4, x_2, x_5\right] 
 +\left[x_4, x_3\right]\left[x_5, x_1, x_2\right]+\left[x_5, x_3, x_4\right]\left[x_2, x_1\right].
\end{multline*}
Then $f \in \langle \mathfrak S\rangle^{T}$.
Moreover, $\langle f,\mathfrak T\rangle^{T}
=
\langle \mathfrak S,\mathfrak T\rangle^{T}.$
\end{prop}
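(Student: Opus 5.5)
The plan is to express $f$ explicitly as a sum of substitution instances of $\mathfrak S$, and then to show that $\mathfrak S$ lies in $\langle f,\mathfrak T\rangle^{T}$. Everything lives in the multilinear component $\mathcal P_5$. Both $f$ and $\mathfrak S$ are proper, and $\mathfrak S$ has degree $5$. Therefore the elements of $\mathcal P_5\cap\langle \mathfrak S\rangle^{T}$ are exactly the linear combinations of the $120$ polynomials $\mathfrak S(x_{\sigma(1)},\ldots,x_{\sigma(5)})$, $\sigma\in\Sym_5$. Multiplication by extra variables, or nonlinear substitutions, only raise the degree. Hence $f\in\langle\mathfrak S\rangle^{T}$ is equivalent to $f$ lying in the $K\mathcal S_5$-submodule of $\mathcal P_5$ generated by $\mathfrak S$. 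Since all coefficients lie in $\mathbb F_2$, this is a linear algebra problem over $\mathbb F_2$.

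The first step is to expand $\mathfrak S$ into commutator form. Using $[uv,w]=u[v,w]+[u,w]v$, we get
\[
[[x_1,x_2][x_3,x_5],x_4]=[x_1,x_2][x_3,x_5,x_4]+[x_1,x_2,x_4][x_3,x_5],
\]
and similarly for the other two summands. Thus $\mathfrak S$ becomes a sum of six products of the types $[\cdot,\cdot][\cdot,\cdot,\cdot]$ and $[\cdot,\cdot,\cdot][\cdot,\cdot]$, which is the same shape as $f$. I would then reduce everything to a fixed basis of $\Gamma_5$ (products of a length-$2$ and a length-$3$ commutator in some normal form, modulo the Jacobi identity and antisymmetry).

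The main step is to find the coefficients. I would try a small combination first. Note that $f$ singles out the pair $x_1,x_2$ in the terms $[x_2,x_1,x_3][x_5,x_4]$ and $[x_3,x_1,x_2][x_5,x_4]$, which suggests summing $\mathfrak S$ over a few permutations fixing the "first" variable $x_1$ (for example, the three transpositions of $\{x_3,x_4,x_5\}$ together with a suitable permutation moving $x_2$). I expect this search to be the main obstacle. By hand it is error-prone, so in practice I would solve the linear system over $\mathbb F_2$ by computer and then record the explicit combination, which the reader can verify directly.

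For the second assertion, the first part gives $\langle f,\mathfrak T\rangle^{T}\subseteq\langle\mathfrak S,\mathfrak T\rangle^{T}$. For the reverse inclusion, it suffices to show $\mathfrak S\in\langle f,\mathfrak T\rangle^{T}$, and by the same argument this is a question about the submodule of $\Gamma_5$ generated by $f$ and $\mathfrak T$. Here I would first try a dimension argument. Compute with Albert, as in Proposition~\ref{prop-02}, the dimension of $\mathcal P_5/(\mathcal P_5\cap\langle \mathrm{st}_4,f,\mathfrak T\rangle^{T})$. If it equals $90$, then from the inclusion and the degree-$5$ dimensions already computed, the $T$-ideals $\langle\mathrm{st}_4,f,\mathfrak T\rangle^{T}$ and $\langle\mathrm{st}_4,\mathfrak S,\mathfrak T\rangle^{T}$ agree in degree $5$. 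However, the statement is without $\mathrm{st}_4$, so the cleaner route is again explicit. Find permutations such that $\mathfrak S$ equals a sum of permuted copies of $f$ plus permuted copies of $\mathfrak T$, solving the $\mathbb F_2$-linear system in $\Gamma_5$ as before.
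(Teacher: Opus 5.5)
Your plan is essentially the paper's proof. The paper also expands $\mathfrak S$ by the derivation rule and then exhibits explicit $\mathbb F_2$-combinations of permuted copies, found by symbolic computation and checked by direct expansion. It records the certificates your search would produce: $f=\mathfrak S_{12435}+\mathfrak S_{12453}+\cdots+\mathfrak S_{41352}$ (thirteen copies of $\mathfrak S$), and $\mathfrak S_{12345}$ as a sum of seventeen permuted copies of $f$ and four of $\mathfrak T$. Your remark that $\mathcal P_5\cap\langle\mathfrak S\rangle^{T}$ (resp.\ $\mathcal P_5\cap\langle f,\mathfrak T\rangle^{T}$) is exactly the span of these permuted copies is correct and guarantees the search succeeds. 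The proof itself, however, rests on writing those explicit identities down, which your proposal leaves to be computed.
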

\begin{proof}
For simplicity, write
\[
\mathfrak S_{i_1i_2i_3i_4i_5}
=
\mathfrak S(x_{i_1},x_{i_2},x_{i_3},x_{i_4},x_{i_5}),
\]
and define analogously $\mathfrak T_{i_1i_2i_3i_4i_5}$ and
$f_{i_1i_2i_3i_4i_5}$.
First, using the derivation rule $[uv,w]=u[v,w]+[u,w]v,$
one obtains
\[
\begin{aligned}
\mathfrak S(x_1,x_2,x_3,x_4,x_5)
={}&[x_1,x_2][x_3,x_5,x_4]
+[x_1,x_2,x_4][x_3,x_5]\\
&+[x_1,x_3][x_4,x_5,x_2]
+[x_1,x_3,x_2][x_4,x_5]\\
&+[x_1,x_4][x_2,x_5,x_3]
+[x_1,x_4,x_3][x_2,x_5].
\end{aligned}
\]
A direct expansion gives
\begin{multline*}
    f=
\mathfrak S_{12435}
+\mathfrak S_{12453}
+\mathfrak S_{12534}
+\mathfrak S_{12543}
+\mathfrak S_{13452}
+\mathfrak S_{21345}
+\mathfrak S_{21453}
+\mathfrak S_{23451}
+\mathfrak S_{31245}\\
+\mathfrak S_{31524}
+\mathfrak S_{31542}
+\mathfrak S_{41235}
+\mathfrak S_{41352}.
\end{multline*}
Hence $f\in \langle \mathfrak S\rangle^T$.
Conversely, by direct expansion in the free associative algebra $K\langle X\rangle$, one has
\[
\mathfrak S_{12345}=F+G,
\]
where
\[
\begin{aligned}
F={}&
f_{12345}+f_{12354}+f_{12435}+f_{12453}
+f_{12534}+f_{12543}\\
&+f_{13245}+f_{13254}+f_{13425}+f_{13452}
+f_{13524}+f_{13542}\\
&+f_{14523}+f_{15234}+f_{15243}
+f_{23415}+f_{23514},
\end{aligned}
\]
and
\[
G=
\mathfrak T_{12345}
+\mathfrak T_{13245}
+\mathfrak T_{15234}
+\mathfrak T_{23145}.
\]
Therefore $\mathfrak S\in \langle f,\mathfrak T\rangle^T$.
The above decompositions were obtained by 
symbolic computation and verified by direct expansion in the free associative algebra $K\langle X \rangle$. The computations were verified by a Python implementation available at~\cite{Ferreira2026ProperIdentities2025}.
\end{proof}
\begin{cor}
Consider $f(x_1,x_2,x_3,x_4,x_5)\in K\langle X\rangle$ as in Proposition~\ref{prop-01}. The multilinear identities of degree at most $7$ for $M_2(K)$
are minimally generated by
\[
\mathrm{st}_4(x_1,x_2,x_3,x_4),\qquad f(x_1,x_2,x_3,x_4,x_5),\qquad \mathfrak T(x_1,x_2,x_3,x_4,x_5).
\]
\end{cor}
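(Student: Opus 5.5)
The corollary follows from Theorem~\ref{theo-01} and Proposition~\ref{prop-01} by substituting $f$ for $\mathfrak S$. Two things need checking: that $\{\mathrm{st}_4,f,\mathfrak T\}$ generates the same multilinear identities of degree at most $7$ as $\{\mathrm{st}_4,\mathfrak S,\mathfrak T\}$, and that none of the three new generators can be omitted.

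\emph{Generation.} Proposition~\ref{prop-01} gives $\langle f,\mathfrak T\rangle^T=\langle \mathfrak S,\mathfrak T\rangle^T$. Adding $\mathrm{st}_4$ to both sides yields
\[
\langle \mathrm{st}_4,f,\mathfrak T\rangle^T=\langle \mathrm{st}_4,\mathfrak S,\mathfrak T\rangle^T=\mathcal I .
\]
The proof of Theorem~\ref{theo-01} showed $\mathcal P_n\cap\mathcal I=\mathcal P_n\cap \Id(M_2(K))$ for $n\le 7$. Hence every multilinear identity of degree at most $7$ is a consequence of $\mathrm{st}_4,f,\mathfrak T$.

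\emph{Minimality.} There are three omissions to rule out.
\begin{itemize}
\item $\mathrm{st}_4$ cannot be omitted. As in the proof of Theorem~\ref{theo-01}, $f$ and $\mathfrak T$ are multilinear of degree $5$, so every element of $\langle f,\mathfrak T\rangle^T$ has all homogeneous components of degree at least $5$. Thus $\mathrm{st}_4\notin\langle f,\mathfrak T\rangle^T$.
\item $\mathfrak T$ cannot be omitted. Proposition~\ref{prop-01} gives $f\in\langle\mathfrak S\rangle^T$, so $\langle \mathrm{st}_4,f\rangle^T\subseteq\langle \mathrm{st}_4,\mathfrak S\rangle^T$. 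If $\mathfrak T$ lay in $\langle \mathrm{st}_4,f\rangle^T$, it would lie in $\langle \mathrm{st}_4,\mathfrak S\rangle^T$, contradicting Proposition~\ref{prop-02}.
\item $f$ cannot be omitted. Suppose $f\in\langle \mathrm{st}_4,\mathfrak T\rangle^T$. Then $\langle \mathrm{st}_4,\mathfrak S,\mathfrak T\rangle^T=\langle \mathrm{st}_4,f,\mathfrak T\rangle^T=\langle \mathrm{st}_4,\mathfrak T\rangle^T$, so $\mathfrak S\in\langle \mathrm{st}_4,\mathfrak T\rangle^T$. This contradicts Proposition~\ref{prop-02}. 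Equivalently, the Albert dimensions $91\neq 90$ for $\mathcal P_5$ modulo these T-ideals separate them.
\end{itemize}

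No step is a genuine obstacle. The only point needing care is the direction of the inclusions in the minimality arguments: the $\mathfrak T$ case uses $f\in\langle\mathfrak S\rangle^T$, while the $f$ case uses $\mathfrak S\in\langle f,\mathfrak T\rangle^T$. All computational input is already contained in Propositions~\ref{prop-02} and~\ref{prop-01}.
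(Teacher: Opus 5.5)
Your proof is correct and follows the paper's argument essentially step for step. Generation comes from $\langle f,\mathfrak T\rangle^T=\langle\mathfrak S,\mathfrak T\rangle^T$ together with Theorem~\ref{theo-01}; minimality comes from Proposition~\ref{prop-02}, using $f\in\langle\mathfrak S\rangle^T$ for the $\mathfrak T$ case and $\mathfrak S\in\langle f,\mathfrak T\rangle^T$ for the $f$ case. One small point in the $\mathrm{st}_4$ case: in the unital setting implicit in the use of proper polynomials, substitutions may have constant terms, so the degree bound needs that $f$ and $\mathfrak T$ are proper (commutators kill constants), not merely that they are multilinear of degree $5$. This is why the paper mentions properness there.
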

\begin{proof}
By Proposition~\ref{prop-01},
\[
f\in \langle\mathfrak S\rangle^{\mathrm T}
\qquad\text{and}\qquad
\mathfrak S\in\langle f,\mathfrak T\rangle^{\mathrm T}.
\]
Hence $\langle \mathrm{st}_4,\mathfrak S,\mathfrak T\rangle^{\mathrm T}=
\langle \mathrm{st}_4,f,\mathfrak T\rangle^{\mathrm T}$. Therefore, Theorem~\ref{theo-01} proves the generating assertion.
It remains to verify minimality. Clearly, $\operatorname{st}_4$
cannot be omitted, since the other two generators have degree~$5$
and are proper.
Suppose that $\mathfrak T\in\langle \operatorname{st}_4,f\rangle^T.$
Since $f\in\langle\mathfrak S\rangle^T$, this would imply $\mathfrak T\in
\langle \operatorname{st}_4,\mathfrak S\rangle^T,$
contrary to Proposition~\ref{prop-02}.
Similarly, if $f\in\langle \operatorname{st}_4,\mathfrak T\rangle^T,$
then, since $\mathfrak S\in\langle f,\mathfrak T\rangle^T,$
we would obtain $\mathfrak S\in
\langle \operatorname{st}_4,\mathfrak T\rangle^T,$
again contradicting Proposition~\ref{prop-02}. Hence the generating set is
minimal.
\end{proof}

\subsection*{Funding}%
This work was carried out as part of the collaboration project between
the Universidade Estadual de Campinas (IMECC--UNICAMP) and the
Universidade Federal do Rio Grande do Norte (DMAT--UFRN).
The author was supported by the Coordenação de Aperfeiçoamento de
Pessoal de Nível Superior -- Brasil (CAPES) -- Finance Code 001.

\subsection*{Acknowledgments}%
The author is grateful to the participants of the workshop
``Matrix Identities,'' organized by the research net
``Combinatorics in Algebraic Structures: UFRN--UNICAMP--UFCG,''
for useful discussions and suggestions that contributed to the
improvement of this note.

\section*{Disclosure statement}

The author declares that there are no relevant financial or non-financial competing interests to report.

{
\small
\bibliography{reference}

\bibliographystyle{abbrvurl}
}

\end{document}